\protected\def\tikz@nonactivecolon{\ifmmode\mathrel{\mathop\ordinarycolon}\else:\fi}
\definecolor{keywordcolor}{rgb}{0.7, 0.1, 0.1}   
\definecolor{tacticcolor}{rgb}{0.1, 0.2, 0.6}    
\definecolor{commentcolor}{rgb}{0.4, 0.4, 0.4}   
\definecolor{symbolcolor}{rgb}{0.0, 0.1, 0.6}    
\definecolor{sortcolor}{rgb}{0.1, 0.5, 0.1}      
\declaretheorem[numberwithin=section]{theorem}
\declaretheorem[sibling=theorem]{lemma}
\declaretheorem[sibling=theorem]{corollary}
\declaretheorem[sibling=theorem]{conjecture}
\declaretheorem[sibling=theorem,style=definition]{definition}
\newcommand*{\Z}{\mathbb{Z}}
\newcommand*{\susp}[1]{S{#1}}
\newcommand*{\join}[2]{{#1}*{#2}}
\newcommand*{\gluesym}{{\mathsf{glue}}}
\newcommand*{\jglue}{\ensuremath\gluesym\xspace}
\newcommand*{\glue}{\ensuremath\gluesym\xspace}
\newcommand*{\meridsym}{{\mathsf{merid}}}
\newcommand*{\merid}{\ensuremath\meridsym\xspace}
\newcommand*{\define}[1]{\emph{#1}}
\newcommand*{\rev}[1]{{#1}^{-1}}
\newcommand*{\defeq}{:=}
\newcommand*{\Sn}{\mathbb{S}}
\newcommand*{\inl}{{\mathsf{inl}}}
\newcommand*{\inr}{{\mathsf{inr}}}
\newcommand*{\ap}{{\mathsf{ap}}}
\newcommand*{\north}{{\mathsf{N}}}
\newcommand*{\south}{{\mathsf{S}}}
\DeclarePairedDelimiter\norm{\lVert}{\rVert}
\newcommand{\blank}{\mathord{\hspace{1pt}\text{--}\hspace{1pt}}}
\title{The Cayley-Dickson Construction in Homotopy Type Theory}
\date{\today}
\author{Ulrik Buchholtz}
\author{Egbert Rijke}
\address{Department of Philosophy, Carnegie Mellon University,
  Pittsburgh, PA 15213, USA}
\email{\{ulrikb,erijke\}@andrew.cmu.edu}
\begin{document}

\begin{abstract}
We define in the setting of homotopy type theory an H-space structure on
$\Sn^3$. Hence we obtain a description of the quaternionic Hopf fibration
$\Sn^3\hookrightarrow\Sn^7\twoheadrightarrow\Sn^4$, using only homotopy invariant tools.
\end{abstract}

\maketitle

\section{Introduction}

Homotopy type theory is the study of a range of homotopy theoretical
interpretations of Martin-L\"of dependent type theory
\cite{MartinLof1984} as well as an
exploration in doing homotopy theory inside type theory
\cite{TheBook}. This paper concerns the latter aspect, in
that we give a purely type theoretic definition of the quaternionic
Hopf fibration
\begin{equation*}
\begin{tikzcd}
\Sn^3 \arrow[hook]{r} & \Sn^7 \arrow[two heads]{r} & \Sn^4.
\end{tikzcd}
\end{equation*}

Classically, the $3$-sphere can be given the H-space structure given by
multiplication of the quaternions of norm $1$.
For any H-space $A$, the Hopf construction produces a fibration
\begin{equation*}
\begin{tikzcd}
A \arrow[hook]{r} & \join{A}{A} \arrow[two heads]{r} & \susp{A},
\end{tikzcd}
\end{equation*}
where $\join{A}{A}$ denotes the \emph{join} of $A$ with itself, and $\susp{A}$
denotes the \emph{suspension} of $A$.
Hence we get the quaternionic Hopf fibration from the H-space structure on
$\Sn^3$ and the Hopf construction.
The Hopf construction has already been developed in homotopy type
theory \cite[Theorem~8.5.11]{TheBook}, so what is needed is to construct the
H-space structure on $\Sn^3$.

When doing homotopy theory in homotopy type theory, we reason directly with (homotopy) types
and not with any mediating presentation of these, e.g.,
as topological spaces or simplicial sets. For example, the spheres are
defined as iterated suspensions of the empty type (which represents
the $(-1)$-sphere), rather than subsets of the Euclidean spaces
$\mathbb R^n$. Therefore, we cannot directly reproduce the classical construction
of the H-space structure on $\Sn^3$, viewed as a subspace of the quaternions.

Instead, our approach in this paper is to find a type-theoretic incarnation of
the Cayley-Dickson construction that is used to form the classical
algebras $\mathbb R$, $\mathbb C$, $\mathbb H$, $\mathbb O$.

The rest of this paper is organized as follows. In
Section~\ref{sec:cayley-dickson} we recall the
classical Cayley-Dickson construction. We then recall the story of the
Hopf construction in homotopy type theory in Section~\ref{sec:hopf}. In the main
section, Section~\ref{sec:imaginaroids}, we discuss how to port the
Cayley-Dickson construction to type theory in order to construct the
H-space structure on $\Sn^3$. Having performed the construction in homotopy type theory,
in Section~\ref{sec:semantics} we discuss the range of models in which
the construction can be performed. We conclude in
Section~\ref{sec:conclusion}.

Our construction has been formalized and checked in the Lean proof
assistant \cite{Moura2015}.
In fact, we developed the results while formalizing them, and the
proof assistant was helpful as a tool to develop the mathematics.
The full code listing of our formalization is
available in the appendix.

Before we begin, let us address a possible route to our result which
we have not taken: It might seem as if the best way to reason about
quaternions and related algebraic structures in homotopy type theory would be to
construct them in the usual set-theoretic way but such that we could
still access, say, the underlying homotopy type of the unit
sphere. Indeed, this would be possible to do in cohesive homotopy type theory
\cite{Shulman2015}. However, as of now there is no known
interpretation of cohesive homotopy type theory into ordinary homotopy type theory preserving homotopy
types, so this would not give a construction in ordinary homotopy type theory. And even
if such an interpretation were possible, it might require more
machinery to develop than what is used here.

\subsection*{Acknowledgements}

The authors gratefully acknowledge the support of the Air Force Office
of Scientific Research through MURI grant FA9550-15-1-0053. Any
opinions, findings and conclusions or recommendations expressed in
this material are those of the authors and do not necessarily reflect
the views of the AFOSR.

%
%

\section{The classical Cayley-Dickson construction}
\label{sec:cayley-dickson}

Classically, the $1$-, $3$- and $7$-dimensional spheres are subspaces of
$\mathbb{R}^2$, $\mathbb{R}^4$ and $\mathbb{R}^8$, respectively. Each
of these vector spaces can be given the structure of a normed division
algebra, and we get the complex numbers $\mathbb{C}$, Hamilton's
quaternions $\mathbb{H}$, and the octonions $\mathbb{O}$ of Graves and
Cayley. Since, in each of these algebras, the product preserves norm,
the unit sphere is a subgroup of the multiplicative group.

Cayley's construction of the octonions was later generalized by
Dickson \cite{Dickson1919}, who gave a uniform procedure for
generating each of these algebras from the previous one. The process
can be continued indefinitely, giving for instance the $16$-dimensional
sedenion-algebra after the octonions.

Here we describe one variant of the Cayley-Dickson construction,
following the presentation in \cite{Baez2002}. For this purpose, let
an \emph{algebra} be a vector space $A$ over $\mathbb R$ together with
a bilinear multiplication, which need not be associative, and a unit
element $1$. A \emph{$*$-algebra} is an algebra equipped with a linear involution
$*$ (called the \emph{conjugation}) satisfying $1^*=1$ and $(ab)^*=b^*a^*$.

If $A$ is a $*$-algebra, then $A' \defeq A\oplus A$ is again a
$*$-algebra using the definitions
\begin{equation}
  \label{eq:classical-cd}
  (a,b)(c,d) := (ac - db^*, a^*d + cb),
  \quad 1 := (1,0),
  \quad (a,b)^* := (a^*,-b).
\end{equation}
If $A$ is \emph{nicely normed} in the sense that (i) for all $a$, we have $a+a^*\in \mathbb R$
(i.e., the subspace spanned by $1$), and (ii) $aa^*=a^*a>0$ for
nonzero $a$, then so is $A'$. In the nicely normed case, we get a norm by defining
\[\norm a = aa^*,\]
and we have inverses given by $a^{-1}=a^*/\norm a$.
By applying this construction repeatedly, starting with $\mathbb{R}$, we obtain the
following sequence of algebras, each one having slightly fewer good
properties than the preceding one:
\begin{itemize}
\item $\mathbb R$ is a \emph{real} (i.e., $a^*=a$) commutative associative
  nicely normed $*$-algebra,
\item $\mathbb C$ is a commutative associative nicely normed $*$-algebra,
\item $\mathbb H$ is an associative nicely normed $*$-algebra,
\item $\mathbb O$ is an \emph{alternative} (i.e., any subalgebra generated by
  two elements is associative) nicely normed $*$-algebra,
\item the sedenions and the following algebras are nicely normed
  $*$-algebras, which are neither commutative, nor alternative.
\end{itemize}
Being alternative, the first four are normed division algebras, as
$a,b,a^*,b^*$ are in the subalgebra generated by $a-a^*$ and $b-b^*$,
so we get
\[ \norm{ab}^2 = (ab)(ab)^* = (ab)(b^*a^*) = a(bb^*)a^* = \norm
  a^2\norm b^2.\]
However, starting with the sedenions, this fails and we get nontrivial
zero divisors. In fact, the zero divisors of norm one in the sedenions
form a group homeomorphic to the exceptional Lie group $G_2$.

To sum up the story as it relates to us, we first form the four normed
division algebras $\mathbb R$, $\mathbb C$, $\mathbb H$ and $\mathbb
O$ by applying the Cayley-Dickson construction starting with $\mathbb
R$, and then we carve out the unit spheres and get spaces with
multiplication $\Sn^0$, $\Sn^1$, $\Sn^3$ and $\Sn^7$.

In homotopy type theory, we cannot use this strategy directly. Before we
discuss our alternative construction, let us recall some basics
regarding H-spaces in homotopy type theory.

\section{H-spaces and the Hopf construction}
\label{sec:hopf}

First, let us briefly recall that the (homotopy) pushout $A \sqcup^C B$ of a span
\begin{equation*}
\begin{tikzcd}
A & C \arrow{l}[swap]{f} \arrow{r}{g} & B
\end{tikzcd}
\end{equation*}
can be modeled in homotopy type theory as a
higher inductive type with \emph{point constructors}
$\inl : A \to A \sqcup^C B$ and $\inr : B \to A \sqcup^C B$ and a \emph{path
constructor} $\glue : \Pi_{(c : C)}\,\inl(f(c)) = \inr(g(c))$.
The \emph{suspension} $\susp A$ of a type $A$ is the pushout of the span
$1 \leftarrow A \rightarrow 1$, which is equivalently described as the
higher inductive type with point constructors $\north$ and $\south$
(corresponding to left and right injections) and path constructor
$\merid : A \to (\north = \south)$, generating the meridians in the
suspension. The \emph{join} $A*B$ of two types $A$ and $B$ is the pushout of
the span $A \leftarrow A\times B\to B$ given by the projections, which
is equivalently described as the higher inductive type with point
constructors $\inl:A \to A*B$, $\inr: B\to A*B$ and path constructor
$\glue: \Pi_{(a : A)}\,\Pi_{(b:B)}\,\inl(a) = \inr(b)$.

In \cite{TheBook}, an H-space\footnote{Serre \cite{Serre1951}
  introduced H-spaces in honor of Hopf.}
in homotopy type theory is defined to consist of a
pointed type $(A,e)$ with a multiplication
$\mu : A \times A \to A$ and equalities $\lambda_a : \mu(e,a)=a$ and
$\rho_a : \mu(a,e)=a$ for all $a:A$.
However, for the Hopf construction it is useful to require that the left and right
\emph{translation maps}, $\mu(a,\blank)$ and $\mu(\blank,a)$, are
equivalences for all $a : A$. This is automatic if $A$ is connected,
but also holds, e.g., if the induced multiplication makes $\pi_0(A)$
into a group.

\begin{definition}
An \emph{H-space} is a pointed type $(A,e)$ with a multiplication 
$\mu:A\times A\to A$, and homotopies $\mu(e,\blank)\sim\mathsf{id}_A$ and
$\mu(\blank,e)\sim\mathsf{id}_A$, such that for each $a:A$ the maps
$\mu(a,\blank)$ and $\mu(\blank,a)$ are equivalences.
\end{definition}

In section 8.5.2 of \cite{TheBook}, there is a description of the Hopf construction,
which takes a connected H-space, and
produces a type family $H$ over $\susp A$ by letting the fibers over
$\north$ and $\south$ be $A$, and giving the equivalence $\mu(a,\blank)$
for the meridian $\merid(a)$. The total space is then shown to be the
join $A*A$ of $A$ with itself. The projection map $\join AA\to\susp A$ can be taken to
send the left component to $\north$, the right component to $\south$,
and for $a,b:A$ the glue path between $\inl\, a$ and $\inr\, b$ to the
meridian $\merid(\mu(a,b))$.\footnote{With the precise equivalence
  $(\Sigma_{(x : \susp A)}\,H(x)) \simeq (A * A)$ from \cite{TheBook} this
  will be mirrored.} With the described map $\join{A}{A}\to\susp{A}$, we have
a commuting triangle
\begin{equation*}
\begin{tikzcd}
\Sigma_{(x:\susp A)}\,H(x) \arrow{rr}{\simeq} \arrow{dr} & & \join{A}{A} \arrow{dl} \\
& \susp{A}.
\end{tikzcd}
\end{equation*}

Note that the only point in the Hopf construction where the connectedness of $A$ is used, is to conclude that
$\mu(a,\blank)$ and $\mu(\blank,a)$ are equivalences for each $a:A$. Hence the
Hopf construction also works if we make this requirement directly, so that it
becomes applicable in a slightly more general setting including the H-space
$\Sn^0$.

\begin{lemma}[The Hopf construction]\label{lem:hopf-construction}
Let $A$ be an H-space for which the translation maps $\mu(a,\blank)$ and
$\mu(\blank,a)$ are equivalences, for each $a:A$. Then there is a fibration
$H:\susp A\to U$ such that
\begin{equation*}
  H(\north)=H(\south)=A,\qquad\text{and}\qquad
  (\Sigma_{(x:\susp A)}\,H(x))=\join{A}{A}.
\end{equation*}
\end{lemma}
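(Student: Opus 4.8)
The plan is to define the fibration $H:\susp A\to U$ directly by induction on the suspension, and then identify its total space with the join by the pushout-flattening lemma. Recall that to define a type family $H:\susp A\to U$ it suffices, by the recursion principle for the suspension, to give types $H(\north)$ and $H(\south)$ together with, for each $a:A$, an identification (in $U$) $H(\north)=H(\south)$, i.e.\ by univalence an equivalence $A\simeq A$. So I would set $H(\north)\defeq A$, $H(\south)\defeq A$, and let the meridian $\merid(a)$ act by $\mathsf{ua}$ applied to the equivalence $\mu(a,\blank):A\simeq A$, which is an equivalence precisely by the translation-map hypothesis. This gives the two stated judgmental-up-to-path equalities $H(\north)=H(\south)=A$ for free.

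For the second claim I would compute the total space $\Sigma_{(x:\susp A)}\,H(x)$. Since $\susp A$ is the pushout $1\leftarrow A\rightarrow 1$, the flattening lemma (Lemma 6.12.2 in \cite{TheBook}) says that the total space of a family defined on a pushout is itself a pushout: concretely, $\Sigma_{(x:\susp A)}\,H(x)$ is the pushout of the span $H(\north)\leftarrow \Sigma_{(a:A)}H(\north)\simeq A\times A\rightarrow H(\south)$, where the two legs are the first projection and the map $(a,y)\mapsto\mu(a,y)$ (the transport along $\merid(a)$, which by construction and the computation rule for $\mathsf{ua}$ is $\mu(a,\blank)$). So the total space is the pushout of $A\xleftarrow{\mathsf{pr}_1}A\times A\xrightarrow{\mu}A$. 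It remains to see this pushout is the join $A*A$, which is the pushout of $A\xleftarrow{\mathsf{pr}_1}A\times A\xrightarrow{\mathsf{pr}_2}A$. The two spans are connected by a commuting diagram of equivalences: use $\mathsf{id}_A$ on the left copy of $A$, the equivalence $A\times A\to A\times A$ given by $(a,b)\mapsto(a,\mu(a,b))$ (its inverse uses that $\mu(a,\blank)$ is an equivalence for each $a$), and $\mathsf{id}_A$ on the right copy of $A$; one checks the left square commutes strictly and the right square commutes by definition. Since pushouts are invariant under such span equivalences, the two pushouts agree, giving $(\Sigma_{(x:\susp A)}\,H(x))=\join{A}{A}$.

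The main obstacle is the bookkeeping in the flattening step: one must carefully match the transport of $H$ along $\merid(a)$ with $\mu(a,\blank)$ — this relies on the computation rule $\mathsf{transport}^{X\mapsto X}(\mathsf{ua}(f),x)=f(x)$ together with the suspension recursion computation rule on meridians — and then feed the resulting span precisely into the hypotheses of the flattening lemma, whose statement has several moving parts (the family over the index type, and the action on the total family). None of this is deep, but it is exactly the kind of coherence chasing that is easy to get wrong on paper and is reassuring to have machine-checked; indeed this is essentially the argument already carried out in \cite[\S8.5.2]{TheBook}, and I would present it as a mild generalization of that, noting as in the discussion above that connectedness of $A$ was only ever used to obtain the translation equivalences, which we now assume outright.
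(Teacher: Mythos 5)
Your proposal takes the same route as the paper: the paper does not reprove this lemma at all---it simply cites Lemma~8.5.7 of the HoTT Book and observes that connectedness of $A$ enters that proof only to make the translations $\mu(a,\blank)$ and $\mu(\blank,a)$ equivalences, which is now assumed outright---and what you write out is precisely the Book's argument (suspension recursion with $\mathsf{ua}(\mu(a,\blank))$ on meridians, the flattening lemma, and a comparison of spans). So in substance you are correct and aligned with the paper.

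There is one bookkeeping slip, in exactly the step you flag as delicate. The flattening lemma identifies $\Sigma_{(x:\susp A)}\,H(x)$ with the pushout of a span whose middle is $\Sigma_{(a:A)}H(\north)\simeq A\times A$ (meridian index first, fiber point second), whose \emph{left} leg is the inclusion of this middle into the fiber over $\north$, i.e.\ $(a,y)\mapsto y$ (the second projection under your ordering), and whose right leg is $(a,y)\mapsto\mu(a,y)$; it is not the first projection as you state (the slip is invisible here only because the index type and the fiber happen to coincide). Consequently the comparison with the join span $A\xleftarrow{\mathsf{pr}_1}A\times A\xrightarrow{\mathsf{pr}_2}A$ cannot use the middle map $(a,b)\mapsto(a,\mu(a,b))$ with identities on the sides; the correct middle equivalence from the flattened span $(\mathsf{pr}_2,\mu)$ to the join span is $(a,y)\mapsto(y,\mu(a,y))$, and its invertibility uses that the \emph{right} translations $\mu(\blank,y)$ are equivalences---a hypothesis your version never actually invokes, which is a symptom of the slip. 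Since both families of translations are assumed to be equivalences, the repair is immediate and the conclusion stands.
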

This is Lemma~8.5.7 of \cite{TheBook}, and it follows from the proof
given there, that we get a fibration sequence $A \to \join AA \to \susp
A$ where we may take the first map to be one of the inclusions.

We also recall that the join operation on types is associative (Lemma 8.5.9
of \cite{TheBook}), and that the suspension $\susp A$ of $A$ is the join $\join{\Sn^0}{A}$ (Lemma 8.5.10 of \cite{TheBook}).
In particular, it follows that $\Sn^{2n+1} \simeq \join{\Sn^n}{\Sn^n}$, for any $n:\mathbb{N}$.
To give the four Hopf fibrations in homotopy type theory, it thus suffices to give the
H-space structures on the spheres $\Sn^0$, $\Sn^1$, $\Sn^3$ and $\Sn^7$.

For $\Sn^0$, i.e., the group $\mathbb Z/2\mathbb Z$, this is trivial, and in the
case of the circle $\Sn^1$, this has already been
formalized and appears in \cite{TheBook}. In the next section we shall
see how to construct the H-space structure on $\Sn^3$.

\section{Spheroids and imaginaroids}
\label{sec:imaginaroids}

We saw in Section~\ref{sec:cayley-dickson} the classical
Cayley-Dickson construction on the level of $*$-algebras. We would obtain nothing
of interest by imitating this directly in homotopy type theory, as any real vector
space is contractible and thus equivalent to the one-point type $1$.

A first idea, which turns out to not quite work, is to give an
analog of the Cayley-Dickson construction on the level of the unit
spheres inside the $*$-algebras, as what we are ultimately after is the
H-space structure on these unit spheres. Thus we propose:
\begin{definition}
  A \define{Cayley-Dickson spheroid}\footnote{We use the term
    ``spheroid'' to emphasize that $S$ is to be thought of as a unit
    sphere, but we do not require $S$ to be an actual sphere.}
  consists of an H-space
  $S$ (we write $1$ for the base point, and concatenation denotes 
  multiplication) with additional operations
\begin{align*}
x &\mapsto x^\ast \tag{\define{conjugation}} \\
x &\mapsto -x \tag{\define{negation}}
\end{align*}
satisfying the further laws
\begin{alignat*}2
  1^* &= 1           &\qquad   (-x)^* &= -x^* \\
  -(-x)&= x = x^{**} &\qquad   x(-y) & = -xy  \\
  (xy)^* &= y^*x^*   &\qquad   x^* x & =1.
\end{alignat*}
\end{definition}

\begin{lemma}
For any two points $x$ and $y$ of a Cayley-Dickson spheroid, we have
$xx^\ast=1$ and $(-x)y=-xy$.
\end{lemma}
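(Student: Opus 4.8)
The plan is to treat the two identities separately; both are short formal consequences of the spheroid axioms and use nothing about the H-space structure beyond ``concatenation denotes multiplication''.

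\emph{The identity $xx^\ast=1$.} First I would specialize the axiom $x^\ast x=1$ to the point $x^\ast$ in place of $x$. This yields $x^{\ast\ast}x^\ast=1$, and rewriting along the involution law $x^{\ast\ast}=x$ gives $xx^\ast=1$. That is the entire argument.

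\emph{The identity $(-x)y=-xy$.} The point here is that the \emph{right}-hand negation law is deliberately not among the axioms --- only $x(-y)=-xy$ is --- so I would reach it by passing to conjugates, where the needed laws are available, and then using that conjugation is injective, being an involution. Concretely, I would compute
\begin{align*}
  ((-x)y)^\ast &= y^\ast(-x)^\ast = y^\ast(-x^\ast) = -(y^\ast x^\ast)\\
               &= -(xy)^\ast = (-(xy))^\ast,
\end{align*}
where the steps use, in order, $(ab)^\ast=b^\ast a^\ast$; $(-x)^\ast=-x^\ast$; the left-hand negation law $a(-b)=-ab$ with $a=y^\ast$ and $b=x^\ast$; $(xy)^\ast=y^\ast x^\ast$; and $(-z)^\ast=-z^\ast$. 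Applying conjugation to both sides of the resulting equation $((-x)y)^\ast=(-(xy))^\ast$ and simplifying with $z^{\ast\ast}=z$ on each side then gives $(-x)y=-(xy)$.

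There is really no hard part: the only thing to spot is that the second identity, although it looks as if it ought to be an axiom, must be routed through the conjugation laws. The one point I would make precise in a formalization is the phrase ``apply conjugation to both sides'': in type-theoretic terms this is $\ap$ of $(\blank)^\ast$ applied to the path $((-x)y)^\ast=(-(xy))^\ast$, composed on each end with the involution homotopy $z^{\ast\ast}=z$.
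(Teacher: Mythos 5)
Your proposal is correct and takes essentially the same route as the paper: the first identity is proved identically, and your computation of $((-x)y)^\ast$ uses exactly the paper's chain of axiom instances ($(ab)^\ast=b^\ast a^\ast$, $(-x)^\ast=-x^\ast$, the left negation law, and involutivity). The only cosmetic difference is that the paper begins with $(-x)y=((-x)y)^{\ast\ast}$ and carries out these steps under the outer conjugation, whereas you conjugate both sides at the end and cancel via the involution --- the same manipulations in a different order.
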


\begin{proof}
  For the first, simply note that $xx^\ast=x^{**}x^*=1$. For the
  second, we have:
  \begin{align*}
    (-x)y & =((-x)y)^{**} & \cdots & =(-y^*x^*)^*\\
          & =(y^*(-x)^*)^* & & =-(y^*x^*)^*\\
          & =(y^*(-x^*))^* & & =-(xy)^{**}\\
          & =\cdots & & =-xy.\qedhere
  \end{align*}
\end{proof}

The hope is now that if $S$ is an \emph{associative} Cayley-Dickson spheroid, then we can give the
join $\join SS$ the structure of a Cayley-Dickson spheroid. This turns out not quite to work, but it is instructive
to see where we get stuck.

We wish to define the multiplication $xy$ for $x,y : \join SS$ by
induction on $x$ and $y$. To do the induction on $x$ we must define
elements $(\inl\, a)y$, $(\inl\, b)y$ and dependent paths
$(\jglue\,ab)_*y : (\inl\, a)y =_{\jglue\, ab} (\inr\, b)y$ for
$a,b:S$. This is of course the same as giving the two bent arrows such
that the outer square commutes in this diagram:
\begin{equation*}
\begin{tikzcd}
S\times S\times(\join{S}{S}) \arrow{r} \arrow{d} & S\times(\join{S}{S}) \arrow{d} \arrow[bend left=20]{ddr} \\
S\times(\join{S}{S}) \arrow{r} \arrow[bend right=12]{drr} & (\join{S}{S})\times(\join{S}{S}) \arrow[densely dotted]{dr} \\
& & \join{S}{S}
\end{tikzcd}
\end{equation*}
In each case we do an induction on $y$, giving the following point
constructor problems, which we solve using equation
\eqref{eq:classical-cd}:
\begin{alignat*}2
  (\inl\, a)(\inl\, c) &\defeq \inl(ac) &\quad
  (\inl\, a)(\inr\, d) &\defeq \inr(a^*d) \\
  (\inr\, b)(\inl\, c) &\defeq \inr(cb) &\quad
  (\inr\, b)(\inr\, d) &\defeq \inl(-db^*)
\end{alignat*}
We must define four dependent paths corresponding to the interaction
of a point constructor with a path constructor, and these we all fill
with $\jglue$ (or its inverse). There results a dependent path problem
in an identity type family, which we can think of as the problem of
filling the square on the left, also depicted on the right as a
\define{diamond}:
\begin{equation}\label{eq:cd-diamond}
  \begin{tikzcd}[arrows=equals]
    \inl(ac) \arrow{r}{\jglue}\arrow{d}[swap]{\jglue} &
    \inr(cb) \arrow{d}{\rev{\jglue}} \\
    \inr(a^*d) \arrow{r}[swap]{\rev{\jglue}} &
    \inl(-db^*)
  \end{tikzcd}
  \qquad
  \begin{tikzcd}[every arrow/.append style={-},row sep=tiny,column sep=tiny]
    & cb \arrow{dl}\arrow{dr} & \\
    -db^* \arrow{dr} & & ac\arrow{dl} \\
    & a^*d &
  \end{tikzcd}
\end{equation}
These diamond shapes will play an important role in the
construction. We can define these diamond types as certain square
types sitting in a join, $\join AB$, for any $a,a':A$ and $b,b':B$:
\begin{equation}\label{eq:gen-diamond}
  \begin{tikzcd}[arrows=equals]
    a \arrow{r}{\jglue}\arrow{d}[swap]{\jglue} &
    b \arrow{d}{\rev{\jglue}} \\
    b' \arrow{r}[swap]{\rev{\jglue}} & a'
  \end{tikzcd}
  \qquad
  \begin{tikzcd}[every arrow/.append style={-},row sep=tiny,column sep=tiny]
    & b \arrow{dl}\arrow{dr} & \\
    a' \arrow{dr} & & a\arrow{dl} \\
    & b' &
  \end{tikzcd}
\end{equation}
The geometric intuition behind the shape is that we
picture the join $\join AB$ as $A$ lying on a horizontal line, $B$ on
a vertical line, and $\jglue$-paths connecting every point in $A$ to
every point in $B$.
\begin{definition}\label{defn:vhdiamond}
  Given a diamond problem corresponding to $a,a':A$ and $b,b':B$ as in
  \eqref{eq:gen-diamond}, if we have either a path $p:a=_Aa'$ or a
  path $q:b=_Bb'$, then we can solve it (i.e., fill the square on the
  left).
\end{definition}
\begin{proof}[Construction]
  By path induction on $p$ resp.\ $q$ followed by easy
  2-dimensional box filling.
\end{proof}
\begin{definition}
  Given types $A_1,A_2,B_1,B_2$ and functions $f:A_1\to A_2$ and
  $g:B_1$ to $B_2$, if we have a solution to the diamond problem in
  $\join{A_1}{B_1}$ given by $a,a':A_1$, $b,b':B_1$, then we apply
  the induced function
  $\join fg:\join{A_1}{B_1}\to \join{A_2}{B_2}$ to obtain a
  solution to the diamond problem in $\join{A_2}{B_2}$ given by
  $f\,a,f\,a':A_2$, $g\,b,g\,b':B_2$:
  \begin{equation*}
    \begin{tikzcd}[every arrow/.append style={-},row sep=tiny,column sep=tiny]
      & b \arrow{dl}\arrow{dr} & \\
      a' \arrow{dr} & & a\arrow{dl} \\
      & b' &
    \end{tikzcd}\quad\mapsto\quad
    \begin{tikzcd}[every arrow/.append style={-},row sep=tiny,column sep=tiny]
      & g\,b \arrow{dl}\arrow{dr} & \\
      f\,a' \arrow{dr} & & f\,a\arrow{dl} \\
      & g\,b' &      
    \end{tikzcd}
  \end{equation*}
\end{definition}
\begin{proof}[Construction]
  This is an instance of applying a function to a square.
\end{proof}
Coming back to \eqref{eq:cd-diamond} and fixing $a,b,c,d:S$, consider
the functions $f,g: S \to S$:
\begin{equation*}
  f(x) \defeq -acx, \qquad g(y) \defeq cyb
\end{equation*}
(we are leaving out the parentheses since we are assuming the
multiplication is associative).
\begin{lemma}\label{lem:calculations}
  If the multiplication is associative, then we have $f(-1)=ac$,
  $f(c^*a^*db^*)=-db^*$, $g(1)=cb$, and $g(c^*a^*db^*)=a^*d$.
\end{lemma}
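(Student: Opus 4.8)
The plan is to prove all four equations by direct computation, exploiting associativity to regroup factors at will and then repeatedly cancelling adjacent conjugate pairs. The ingredients are the spheroid axioms $x\cdot 1 = x$, $1\cdot x = x$, $x(-y) = -xy$, $-(-x) = x$, the axiom $x^\ast x = 1$, and the identity $xx^\ast = 1$ established in the lemma just above. Since associativity is assumed throughout this section, unbracketed products are unambiguous.

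For $f(-1)$ we compute $f(-1) = -\bigl(ac\cdot(-1)\bigr) = -\bigl(-(ac\cdot 1)\bigr) = -(-(ac)) = ac$, using $x(-y) = -xy$, the right unit law, and $-(-x) = x$. For $f(c^\ast a^\ast d b^\ast)$ we have $f(c^\ast a^\ast d b^\ast) = -\bigl(ac\cdot c^\ast a^\ast d b^\ast\bigr)$; reassociating this as $-\bigl(a\,(cc^\ast)\,a^\ast d b^\ast\bigr)$ and cancelling $cc^\ast = 1$ and then $aa^\ast = 1$ leaves $-(db^\ast) = -db^\ast$. For $g$, the equation $g(1) = c\cdot 1\cdot b = cb$ is immediate from the unit laws, and $g(c^\ast a^\ast d b^\ast) = c\cdot c^\ast a^\ast d b^\ast\cdot b$ reassociates to $(cc^\ast)\,a^\ast d\,(b^\ast b)$, which collapses to $a^\ast d$ after cancelling $cc^\ast = 1$ and $b^\ast b = 1$.

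There is no genuine difficulty here; the only thing to watch is the order of the factors, since we are not assuming commutativity, so that each cancellation is applied to a truly adjacent pair — in particular the final step uses the axiom in the form $b^\ast b = 1$ rather than $bb^\ast = 1$. Each of the four identities is thus a short chain of rewrites, which in the formalization is discharged by applying the corresponding sequence of rewriting lemmas.
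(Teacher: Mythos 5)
Your proof is correct and follows essentially the same route as the paper: the paper proves one case (the $f(c^*a^*db^*)$ computation) in the text and the remaining three in the formalization (\texttt{lemma\_1}--\texttt{lemma\_4}), by exactly the same reassociation-and-cancellation chains you give, using $x(-y)=-xy$, the unit laws, $xx^*=1$ and $x^*x=1$. Your attention to using $b^*b=1$ (rather than $bb^*=1$) in the last step of the $g(c^*a^*db^*)$ calculation matches the paper's use of \texttt{norm'} there.
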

\begin{proof}
  For example,
  \begin{alignat*}2
    ac(-c^*a^*db^*)
    &= -acc^*a^*db^* & \qquad\cdots
    &= -(aa^*)db^* \\
    &= -a(cc^*)a^*db^* &
    &= -1db^* \\
    &= -a1a^*db^* &
    &= -db^*.
  \end{alignat*}
  \par \vspace{-1.3\baselineskip}
  \qedhere
\end{proof}
Thus, it suffices to solve the diamond problem,
\begin{equation}\label{eq:x-diamond}
  \begin{tikzcd}[every arrow/.append style={-},row sep=tiny,column sep=tiny]
    & 1 \arrow{dl}\arrow{dr} & \\
    c^*a^*db^* \arrow{dr} & & -1\arrow{dl} \\
    & c^*a^*db^* &
  \end{tikzcd}\quad\text{or simply,}\quad
  \begin{tikzcd}[every arrow/.append style={-},row sep=tiny,column sep=tiny]
    & 1 \arrow{dl}\arrow{dr} & \\
    x \arrow{dr} & & -1\arrow{dl} \\
    & x &
  \end{tikzcd}
\end{equation}
with $x=c^*a^*db^*$. Naively, we might hope to solve this problem for every
$x:S$. However, considering the case where $S$ is the unit $0$-sphere
$\{\pm1\}$ in $\mathbb R$, it seems necessary to make a case
distinction on $x$ to do so. This motivates the following revised
strategy.

\subsection{Cayley-Dickson imaginaries}

Instead of just axiomatizing the unit sphere, we shall make use of the
fact that all the unit spheres in the Cayley-Dickson algebras are
suspensions of the unit sphere of imaginaries (the unit $0$-sphere in
$\mathbb R$ is of course the suspension of the $-1$-sphere, i.e., the
empty type, which corresponds to the fact the $\mathbb R$ is a
\emph{real} algebra with no imaginaries).

First we note that both conjugation and negation on Cayley-Dickson
sphere are determined by the negation acting on the imaginaries. In
fact, we can make the following general constructions:
\begin{definition}
  Suppose $A$ is a type with a negation operation. Then we can define a
  conjugation and a negation on the suspension $\susp A$ of $A$:
  \begin{alignat*}2
    \north^* &\defeq \north &   -\north &\defeq \south \\
    \south^* &\defeq \south &   -\south &\defeq \north \\
    \ap\,(\lambda x. x^*)\,(\merid\,a) &:= \merid(-a) &\quad
    \ap\,(\lambda x. {-x})\,(\merid\,a)    &:= \rev{\merid(-a)}
  \end{alignat*}
  We give $\susp A$ the base point $\north$, which we also write as
  $1$. If the negation on $A$ is involutive, then so is the
  conjugation and negation on $\susp A$.
\end{definition}

\begin{definition}
  A \define{Cayley-Dickson imaginaroid} consists of a type $A$ with an
  involutive negation, together with a binary multiplication operation
  on the suspension $\susp A$, such that $\susp A$ becomes an H-space satisfying
  the \define{imaginaroid laws}
  \begin{align*}
    x(-y) &= -xy \\
    xx^* & =1 \\
    (xy)^* &= y^*x^*
  \end{align*}
  for $x,y:\susp A$.
\end{definition}
Note that if $A$ is a Cayley-Dickson imaginaroid, then $\susp A$
becomes a Cayley-Dickson spheroid.

\begin{definition}\label{def:imag-h-space}
Let $A$ be a Cayley-Dickson imaginaroid where the multiplication on
$\susp A$ is associative. Then $A' \defeq \join{\susp A}{\susp A}$ can
be given the structure of an H-space.
\end{definition}
\begin{proof}[Construction]
We can define the multiplication on $\join{\susp A}{\susp A}$
as in the previous section, leading to the
diamond problem~\eqref{eq:x-diamond}. This we now solve by induction
on $x:\susp A$. The diamonds for the poles are easily filled using
Definition~\ref{defn:vhdiamond}:
\begin{equation*}
  \begin{tikzcd}[every arrow/.append style={-},row sep=tiny,column sep=tiny]
    & \north \arrow{dl}\arrow{dr}\arrow[-, double equal sign distance]{dd} & \\
    \north \arrow{dr} & & \south\arrow{dl} \\
    & \north &
  \end{tikzcd}\qquad
  \begin{tikzcd}[every arrow/.append style={-},row sep=tiny,column sep=tiny]
    & \north \arrow{dl}\arrow{dr} & \\
    \south \arrow{dr}\arrow[-, double equal sign distance]{rr} & & \south\arrow{dl} \\
    & \south &
  \end{tikzcd}
\end{equation*}
These solutions must now be connected by filling, for every $a : A$,
the following hollow cube connecting the diamonds:
\begin{equation*}
  \begin{tikzcd}[every arrow/.append style={-}]
    &   &   &   & \north\arrow{dl}\arrow{dr} & \\
    & \north\arrow{dl}\arrow{dr}\arrow[dashed]{urrr} &
    & \south\arrow{dr}\arrow[-, double equal sign distance]{rr} & & \south\arrow{dl} \\
    \north\arrow{dr}\arrow[dashed]{urrr} & &
    \south\arrow[crossing over]{ul}\arrow{dl}\arrow[dashed,crossing over]{urrr} &
    & \south & \\
    & \north\arrow[-, double equal sign distance,crossing over]{uu}\arrow[dashed]{urrr} & & & &
  \end{tikzcd}
\end{equation*}
Here, the two dashed paths $\north=\north$ and $\south=\south$ are
identities, while the other two are each the meridian,
$\merid\,a:\north=\south$. Generalizing a bit, we see that we can fill
any cube in a symmetric join, $\join BB$, with $p:x=_By$, of this form:
\begin{equation*}
  \begin{tikzcd}[every arrow/.append style={-}]
    &   &   &   & x\arrow{dl}\arrow{dr} & \\
    & x\arrow{dl}\arrow{dr}\arrow[dashed]{urrr} &
    & y\arrow{dr}\arrow[-, double equal sign distance]{rr} & & y\arrow{dl} \\
    x\arrow{dr}\arrow[dashed]{urrr} & &
    y\arrow[crossing over]{ul}\arrow{dl}\arrow[dashed,crossing over]{urrr} &
    & y & \\
    & x\arrow[-, double equal sign distance,crossing over]{uu}\arrow[dashed]{urrr} & & & &
  \end{tikzcd}
\end{equation*}
Indeed, this follows by path induction on $p$ followed by trivial
manipulations.

This multiplication has the virtue that the H-space laws $1x=x1=x$ are
very easy to prove; indeed, for point constructors they follow from
the H-space laws on $\susp A$, and since these point constructors land
in the two different sides of the join, we can glue them together
trivially on path constructors.
\end{proof}

\begin{conjecture}
  Suppose $A$ is a Cayley-Dickson imaginaroid where the multiplication on
  $\susp A$ is associative and some further {\normalfont(}for now
  unspecified\/{\normalfont)}
  coherence conditions obtain. Then $A'\defeq\join{A}{\susp A}$ can
  also be given the structure of a Cayley-Dickson imaginaroid, which
  is associative if $A$ is furthermore commutative.
\end{conjecture}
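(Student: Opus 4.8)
The plan is to reduce the construction to Definition~\ref{def:imag-h-space} together with the identification $\susp{A'} \simeq \join{\susp A}{\susp A}$. Since $\susp B \simeq \join{\Sn^0}{B}$ and the join is associative,
\[
  \susp{A'} \simeq \join{\Sn^0}{(\join{A}{\susp A})} \simeq \join{(\join{\Sn^0}{A})}{\susp A} \simeq \join{\susp A}{\susp A},
\]
and this equivalence is pointed, carrying $\north$ to $\inl(\north)$. As the multiplication on $\susp A$ is assumed associative, Definition~\ref{def:imag-h-space} equips $\join{\susp A}{\susp A}$ with an H-space structure, which we transport along the displayed equivalence (an H-space structure transports along any pointed equivalence) to obtain one on $\susp{A'}$. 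It then remains to put an involutive negation on $A' = \join{A}{\susp A}$ and to verify the imaginaroid laws for the conjugation and negation it induces on $\susp{A'}$. For the negation we take the join of the given involutive negation on $A$ with the induced involutive negation on $\susp A$ (from the definition of suspension conjugation and negation stated just before Definition~\ref{def:imag-h-space}); it is again involutive by functoriality of the join.

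Before tackling the laws, the first task is to compute how the conjugation and negation on $\susp{A'}$ — determined by $\north^* = \north$, $\south^* = \south$, $\ap\,(\lambda x. x^*)\,(\merid\,a') = \merid(-a')$ and symmetrically for negation — correspond, across the equivalence $\susp{A'} \simeq \join{\susp A}{\susp A}$, to operations on $\join{\susp A}{\susp A}$. The expected answer, mirroring the classical $(a,b)^* = (a^*,-b)$ and $-(a,b) = (-a,-b)$, is that conjugation becomes $\inl(s) \mapsto \inl(s^*)$ and $\inr(t) \mapsto \inr(-t)$, while negation becomes $\inl(s) \mapsto \inl(-s)$ and $\inr(t) \mapsto \inr(-t)$, with $\jglue$ paths sent to $\jglue$ paths or their inverses; these identifications are readily checked to be consistent with the imaginaroid laws on all point-constructor pairs. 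Establishing the correspondence itself is a diagram chase through the join-associativity equivalence and the suspension definitions — routine, but calling for care about the directions of the various $\jglue$ paths.

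With those identifications in hand, each imaginaroid law — $x(-y) = -xy$, $xx^* = 1$, and $(xy)^* = y^*x^*$ — is to be proved for the transported multiplication by a double induction on the join coordinates of $x$ and $y$ in $\join{\susp A}{\susp A}$. The point-constructor cases unfold, via the defining equations $(\inl\,a)(\inl\,c) = \inl(ac)$, $(\inl\,a)(\inr\,d) = \inr(a^*d)$, $(\inr\,b)(\inl\,c) = \inr(cb)$, $(\inr\,b)(\inr\,d) = \inl(-db^*)$ of Definition~\ref{def:imag-h-space}, into the imaginaroid laws already available on $\susp A$: for instance $xx^* = 1$ reduces to $\inl(aa^*) = \inl(1)$ and $\inl(-(-b)b^*) = \inl(1)$, which follow from $zz^* = 1$ on $\susp A$ (the second also using the spheroid law $(-x)y = -xy$ and involutivity of negation) together with the fact that $\inl(1)$ is the H-space unit. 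The path-constructor cases, however, require that the diamonds and cubes used to build the multiplication in Definition~\ref{def:imag-h-space} be themselves compatible with the conjugation- and negation-diamonds of $\susp A$, and this is where the (for now unspecified) coherence conditions must enter. Associativity of $\susp{A'}$ when $\susp A$ is commutative is then an analogous, but more coherence-heavy, triple induction.

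The main obstacle — and the reason this is posed only as a conjecture — is precisely to pin down those coherence conditions and to discharge the resulting higher cells. The multiplication on $\join{\susp A}{\susp A}$ is built as a tower of inductions whose upper layers land first in diamond types and then in cube types, so every equation between composites of it unwinds into a demand that these diamonds and cubes cohere: a two-dimensional fact for each law involving a single multiplication, a three-dimensional one for $(xy)^* = y^*x^*$ at path constructors, and a yet higher-dimensional one for associativity in the commutative case. The real work is to isolate a minimal package of coherences on $A$ — beyond associativity of $\susp A$, and beyond commutativity of $\susp A$ for the associativity clause — that simultaneously (i) makes the needed fillers definable and mutually consistent, and (ii) is verifiably satisfied by $A = \Sn^2$, so that $\susp A = \Sn^3$ recovers the quaternionic case, and ideally also by $A = \Sn^6$ for the octonionic one. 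Once such conditions are fixed, what remains is the lengthy but essentially mechanical box- and cube-filling, of the same flavour as the fillings already carried out in Definitions~\ref{defn:vhdiamond} and~\ref{def:imag-h-space}.
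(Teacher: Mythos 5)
The statement you are addressing is a \emph{conjecture}: the paper offers no proof, only the remarks immediately following it (negation on $A'$ induced from the negations on $A$ and $\susp A$; the identification $\susp A'\simeq\join{\susp A}{\susp A}$ via associativity of join and $\join{\Sn^0}{X}\simeq\susp X$; the multiplication then coming from Definition~\ref{def:imag-h-space}; and the admission that ``the hard part is then to verify the algebraic laws,'' which is where the unspecified coherence conditions are expected to enter). Your proposal retraces exactly this sketch and then stops at the same point: the coherence conditions are never identified, and the verification of $x(-y)=-xy$, $xx^*=1$ and $(xy)^*=y^*x^*$ for the transported structure is only carried out (informally) at point constructors, with the path-constructor, diamond- and cube-level cells deferred to ``lengthy but essentially mechanical'' filling that is never performed. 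That deferral is not a routine omission: the multiplication of Definition~\ref{def:imag-h-space} is itself built from diamond and cube fillers, so the laws at path constructors are precisely where new higher coherence data must be produced, and whether a finite, checkable package of such data exists and suffices (and is satisfied in the intended examples) is the entire content of the conjecture. So the proposal does not close the gap; it restates the problem in more detail than the paper but proves nothing beyond what the paper already asserts.

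One concrete error in your discussion of test cases: the instances are off by one application of the construction. The quaternionic case corresponds to the coherences holding for $A=\Sn^0$ (with $\susp A=\Sn^1$ associative and commutative), yielding the imaginaroid $A'=\join{\Sn^0}{\Sn^1}=\Sn^2$ and hence the H-space $\susp A'=\Sn^3$; the octonionic case corresponds to $A=\Sn^2$, yielding $A'=\Sn^6$ and the H-space $\Sn^7$. Requiring the conditions for $A=\Sn^6$, as you suggest ``for the octonionic one,'' would instead target the sedenionic level $\Sn^{15}$; this cannot succeed (classically $\Sn^{15}$ is not an H-space), and in any case the hypothesis of the conjecture already fails there, since the multiplication on $\susp\Sn^6=\Sn^7$ is not associative.
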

We get of course a negation on $A'$ in a canonical way using the
negations on $A$ and $\susp A$. Using associativity of join and the
fact that $\join{\Sn^0}X=\susp X$ for any $X$, we get
$\susp A'=\join{\Sn^0}{(\join A{\susp A})} =
\join{(\join{\Sn^0}A)}{\susp A}=\join{\susp A}{\susp A}$. Thus the
multiplication on $\susp A$ comes from the previous
construction. The hard part is then to verify the algebraic laws,
which is where we expect that coherence conditions on the algebraic
structure for $A$ will come in.

Let us finish this section by stating the result of combining
the Hopf construction (Lemma~\ref{lem:hopf-construction}) and the
H-space structure on $\Sn^3$, which we obtain from
Definition~\ref{def:imag-h-space} using the obvious imaginaroid
structure on $\Sn^0$ and the associativity of the H-space structure on
$\Sn^1=\susp\Sn^0$:
\begin{theorem}
  There is a fibration sequence
  \[ \Sn^3 \to \Sn^7 \to \Sn^4 \]
  of pointed maps.
\end{theorem}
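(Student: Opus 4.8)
The plan is to assemble the fibration sequence $\Sn^3 \to \Sn^7 \to \Sn^4$ directly from the pieces already developed in the excerpt. First I would record that $\Sn^0$ carries an obvious Cayley-Dickson imaginaroid structure: its underlying type is the $(-1)$-sphere (the empty type), so $\susp\Sn^0 = \Sn^1$, and the group structure of $\Sn^1 = \mathbb{Z}/2\mathbb{Z}$ (equivalently, the standard H-space structure on the circle recalled in Section~\ref{sec:hopf}) makes $\Sn^1$ an H-space; the imaginaroid laws $x(-y) = -xy$, $xx^* = 1$, $(xy)^* = y^*x^*$ are checked on the finitely many point constructors, and involutivity of negation on the empty type is vacuous. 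I would then note that this H-space structure on $\Sn^1$ is associative — again a finite check on the group $\mathbb{Z}/2\mathbb{Z}$, or alternatively by transporting along the equivalence with $\mathbb{Z}/2\mathbb{Z}$ as a group.

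Next I would invoke Definition~\ref{def:imag-h-space} with $A = \Sn^0$: since $\susp\Sn^0 = \Sn^1$ and its multiplication is associative, the type $A' \defeq \join{\Sn^1}{\Sn^1}$ acquires an H-space structure. I would then identify this type as $\Sn^3$, using the facts recalled after Lemma~\ref{lem:hopf-construction}: the join is associative, $\susp X = \join{\Sn^0}{X}$, and hence $\Sn^{2n+1} \simeq \join{\Sn^n}{\Sn^n}$; taking $n = 1$ gives $\join{\Sn^1}{\Sn^1} \simeq \Sn^3$. Transporting the H-space structure along this equivalence yields an H-space structure on $\Sn^3$ itself, with translation maps that are equivalences because $\Sn^3$ is connected (so the extra hypothesis of Lemma~\ref{lem:hopf-construction} is automatic).

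Finally I would feed this H-space into the Hopf construction, Lemma~\ref{lem:hopf-construction} with $A = \Sn^3$: it produces a fibration $H : \susp\Sn^3 \to U$ with $H(\north) = H(\south) = \Sn^3$ and total space $\Sigma_{(x:\susp\Sn^3)}\,H(x) = \join{\Sn^3}{\Sn^3}$, and as remarked after that lemma this gives a fibration sequence $\Sn^3 \to \join{\Sn^3}{\Sn^3} \to \susp\Sn^3$ in which the first map is an inclusion. Now $\susp\Sn^3 = \Sn^4$ by definition of the spheres, and $\join{\Sn^3}{\Sn^3} \simeq \Sn^7$ by the identity $\Sn^{2n+1} \simeq \join{\Sn^n}{\Sn^n}$ with $n = 3$. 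Transporting the fibration sequence along these two equivalences yields the desired fibration sequence $\Sn^3 \to \Sn^7 \to \Sn^4$ of pointed maps, the pointing being inherited from the base points (the north pole, i.e.\ $1$, throughout).

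The only real content has already been discharged in Definition~\ref{def:imag-h-space}; the step I expect to require the most care is purely bookkeeping — keeping track of base points and the direction of the equivalences $\join{\Sn^1}{\Sn^1}\simeq\Sn^3$ and $\join{\Sn^3}{\Sn^3}\simeq\Sn^7$ so that the transported maps are genuinely pointed and compose correctly — together with confirming that the ad hoc H-space and associativity laws for $\Sn^1$ really do hold on the nose for the circle's standard multiplication (as opposed to merely up to higher coherence), which is where one leans on the concrete description of $\Sn^1$ as $\mathbb{Z}/2\mathbb{Z}$ or on the formalization.
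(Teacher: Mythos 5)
Your overall route is exactly the paper's: give $\Sn^0$ the obvious imaginaroid structure, feed the associative H-space $\Sn^1=\susp\Sn^0$ into Definition~\ref{def:imag-h-space} to obtain the H-space structure on $\join{\Sn^1}{\Sn^1}\simeq\Sn^3$, and then combine the Hopf construction (Lemma~\ref{lem:hopf-construction}) with $\join{\Sn^3}{\Sn^3}\simeq\Sn^7$ and $\susp\Sn^3=\Sn^4$; the remarks about base points and about connectedness of $\Sn^3$ making the translation maps equivalences are also as in the paper.

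However, your justification of the key input contains a genuine error. You identify $\Sn^1$ with $\mathbb{Z}/2\mathbb{Z}$ and claim that the H-space laws, the imaginaroid laws and associativity can be ``checked on the finitely many point constructors'' or by ``transporting along the equivalence with $\mathbb{Z}/2\mathbb{Z}$''. It is $\Sn^0$ that is $\mathbb{Z}/2\mathbb{Z}$; $\Sn^1=\susp\Sn^0$ is the circle, which has a path constructor and is not equivalent to any finite group, so there is nothing to transport along and no finite check to do. Verifying $xx^*=1$, $(xy)^*=y^*x^*$, $x(-y)=-xy$ and associativity for the circle multiplication is genuine (if routine) higher path algebra: one must treat the loop/meridian case, e.g.\ show that the suspension negation on $\Sn^1$ is homotopic to the identity and that the suspension conjugation agrees with circle inversion, and associativity of the circle multiplication is itself a separate lemma (available because the relevant obstructions land in sets, the circle being a $1$-type). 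This is precisely where the paper's formalization does its work (circle\_neg\_id, circle\_star\_eq, circle\_norm, circle\_star\_mul, and the imported circle\_assoc). A smaller slip in the same vein: the underlying type of the imaginaroid is $\Sn^0$ itself (two points, with negation swapping them, involutivity inherited from the suspension construction applied to the empty type), not the empty type. Once these verifications are actually supplied, the rest of your argument goes through and coincides with the paper's.
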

\begin{corollary}
  There is an element of infinite order in $\pi_7(\Sn^4)$.
\end{corollary}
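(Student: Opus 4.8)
The plan is to read the desired element straight off the long exact sequence of homotopy groups attached to the fibration sequence $\Sn^3 \to \Sn^7 \to \Sn^4$ just produced; the only non-elementary ingredient will be the known computation $\pi_n(\Sn^n) \cong \Z$ \cite{TheBook}.

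Writing $i : \Sn^3 \to \Sn^7$ for the fiber inclusion and $p : \Sn^7 \to \Sn^4$ for the projection, I would first record the relevant stretch of the long exact sequence,
\[
  \pi_7(\Sn^3) \xrightarrow{i_*} \pi_7(\Sn^7) \xrightarrow{p_*} \pi_7(\Sn^4) \xrightarrow{\partial} \pi_6(\Sn^3),
\]
all of whose terms are abelian groups (the indices being at least $2$) and in which exactness holds in this range. The key point is that $i_*$ is the zero homomorphism. By the remark following Lemma~\ref{lem:hopf-construction} we may take $i$ to be one of the canonical inclusions $\Sn^3 \to \join{\Sn^3}{\Sn^3}$, say $\inl$; but for every $a$ there is a path $\glue\,a\,1 : \inl\,a = \inr\,1$ to the fixed point $\inr\,1$, so $\inl$ is null-homotopic, and since $\Sn^7$ is simply connected this forces $i_* = 0$ on every homotopy group. (Equivalently: $\Sn^7$ is $6$-connected, hence $\Omega^3\Sn^7$ is connected and every pointed map $\Sn^3 \to \Sn^7$ is already null.)

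With $i_* = 0$ in hand, exactness at $\pi_7(\Sn^7)$ shows that the kernel of $p_*$ equals the image of $i_*$, which is trivial; hence $p_*$ is injective, and as $\pi_7(\Sn^7) \cong \Z$ its image is an infinite cyclic subgroup of $\pi_7(\Sn^4)$. Concretely, $p_*$ carries a generator of $\pi_7(\Sn^7)$ to the homotopy class of the Hopf map $p$ itself, which is therefore an element of infinite order. I do not anticipate a genuine obstacle: once the fibration sequence is available the argument is a short diagram chase, and all the real content is packaged into the imported fact $\pi_n(\Sn^n) \cong \Z$ — the long exact sequence of a fibration, the connectivity of spheres, and the nullity of the fiber inclusion all being routine in the present framework.
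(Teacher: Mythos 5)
Your proposal is correct and follows essentially the same route as the paper: extract the exact sequence $\pi_7(\Sn^3)\to\pi_7(\Sn^7)\to\pi_7(\Sn^4)$ from the long exact sequence of the fibration, observe the fiber inclusion $\Sn^3\hookrightarrow\join{\Sn^3}{\Sn^3}$ is nullhomotopic (so the first map is zero), and use $\pi_7(\Sn^7)=\Z$ to get an injection $\Z\to\pi_7(\Sn^4)$. Your extra detail (the $\glue$-path nullhomotopy and the remark that a free nullhomotopy suffices by simple connectivity) merely spells out what the paper leaves implicit.
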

\begin{proof}
  Consider the long exact sequence of homotopy groups
  \cite[Theorem~8.4.6]{TheBook} corresponding to the above fibration
  sequence. In particular, we get the exactness of
  \[
    \pi_7(\Sn^3) \to \pi_7(\Sn^7) \to \pi_7(\Sn^4).
  \]
  The inclusion of the fiber, $\Sn^3 \hookrightarrow
  \join{\Sn^3}{\Sn^3} = \Sn^7$, is nullhomotopic, so the first map is
  zero. Since $\pi_7(\Sn^7) = \Z$, we get an exact sequence
  \[
    0 \to \Z \to \pi_7(\Sn^4),
  \]
  which gives the desired element of infinite order.
\end{proof}

\section{Semantics}
\label{sec:semantics}

One expects that anything that is done in homotopy type theory, can also be
done in most $(\infty,1)$-toposes. However, general $(\infty,1)$-topos semantics of 
homotopy type theory is currently still conjectural. 
Nonetheless, there is semantics for homotopy type theory in the usual 
$(\infty,1)$-topos of $\infty$-groupoids (in terms of simplicial sets 
\cite{KapulkinLumsdaine2012}, and in cubical sets \cite{BCH2014,CCHM2016}),
and in certain presheaf $(\infty,1)$-toposes \cite{Shulman2013,ShulmanEI}.

On the other hand, given a particular construction in homotopy type theory,
one can investigate what semantics is needed to perform the construction in other
`homotopy theories', for instance in $(\infty,1)$-categories presented by (Quillen) model categories. 
An example of this kind is given by \cite{Rezk}, who translated the formalized
proof of the Blakers-Massey theorem to obtain a new, purely homotopy theoretic
proof in the category of spaces.
In this section, we describe what seems to be needed to perform (i) the construction
of the H-space structure on $\Sn^3$ (Section \ref{sec:imaginaroids}),
and (ii) the Hopf construction (Lemma~\ref{lem:hopf-construction}).

The Hopf construction requires some form of univalence, for instance
an object classifier as in an $(\infty,1)$-topos.
For any H-space $A$ we always have a
map $\join AA \to \susp A$, but in general the homotopy fiber may fail to
be $A$ (consider, e.g., $\Sn^0$ in the category of sets equipped with the
trivial model structure).

Observe that for the construction of the H-space structure on $\Sn^3$, we have
used only a small fragment of homotopy type theory. 
We have used dependent sums, identity types, and homotopy pushouts. A priori
we also use the inductive families of squares and cubes (of paths in a type),
but these can be equivalently defined in terms of identity types, see the 
next subsection \ref{sec:machinery}.

In general, to model dependent sums and identity types in a Quillen model category,
some extra coherence is needed \cite{AwodeyWarren2009,vdBergGarner2012}. 
However, to reproduce a particular type theoretic construction, this extra coherence may not be needed.
Since a Quillen model category has homotopy pushouts, an empty space and a unit space, it also has the $n$-spheres.
The construction corresponding to Definition~\ref{def:imag-h-space}
only uses finite homotopy colimits and their universal properties. 
Therefore, we expect that the construction of the H-space structure on $\Sn^3$
is possible in any Quillen model category.

\subsection{The cubical machinery}\label{sec:machinery}

In the formalization we use the cubical methods of
\cite{LicataBrunerie2015}, which consists in using inductively defined
families of square, cubes, squareovers, etc.
These are available in any model category (up to pullback stability), because there are
alternative definitions just in terms of identity types and dependent
sums.

Consider for instance the type of squares in a type $A$. These are
parameterized by the top-left corner $a_{00} : A$. The dependent sum
type
\begin{multline*}
  B := 
  \Sigma_{(a_{02} : A)}\,\Sigma_{(a_{20} : A)}\,\Sigma_{(a_{22} : A)}\, \\
  (a_{00} = a_{02})\times(a_{20} = a_{22})\times (a_{00} = a_{20}) \times (a_{02} = a_{22})
\end{multline*}
describes the type of boundaries of squares in $A$ with top-left
corner $a_{00}$. There is an obvious element $r := (a_{00}, a_{00}, a_{00},
1_{a_{00}}, 1_{a_{00}}, 1_{a_{00}}, 1_{a_{00}})$ representing the
trivial boundary. Now the type of squares with boundary $b : B$ can be
represented simply as the identity type $(b = r)$. The representation
of cubes and squareovers proceeds in a similar manner.

We are grateful to Christian Sattler for this observation, which
derives from considerations of the Reedy fibrant replacement of the
constant diagram over the semi-cubical indexing category.

\section{Conclusion}
\label{sec:conclusion}

One might also wonder whether our construction applies to other H-spaces
in the usual homotopy category besides the spheres $\Sn^0$, $\Sn^1$, and
$\Sn^3$, in other words, what are the associative imaginaroids in
ordinary homotopy theory?

We are grateful to Mark Grant and Qiaochu Yuan for the following
observations (in response to a question on MathOverflow
\cite{MOHspaces}). If a space $X$ is a suspension, then it is
automatically a co-H-space, and \cite{West1972} proved that the only
finite complexes which are both H-spaces and co-H-spaces are the
spheres $\Sn^0$, $\Sn^1$, $\Sn^3$ and $\Sn^7$. Beyond the finite complexes,
note that the rationalization $\Sn^{2n+1}_{\mathbb Q}$ of any odd-dimensional sphere is an
associative H-space that is also a suspension, but in this case we
already know that the join $\join{\Sn^{2n+1}_{\mathbb
    Q}}{\Sn^{2n+1}_{\mathbb Q}} \simeq \Sn^{4n+3}_{\mathbb Q}$ is again an
H-space.
It remains to be seen whether there are non-trivial applications in
other homotopy theories.

The classical Cayley-Dickson construction gives more than just the
H-space structure on $\Sn^3$, namely it presents $\Sn^3$ as the
topological group $Sp(1)$ (which is also $SU(2)$). Topological groups
can be represented in homotopy type theory via their classifying
types, but we do not know how to define a delooping of $\Sn^3$ in
homotopy type theory (classically this would be the
infinite-dimensional quaternionic projective space
$\mathbb{H}\mathrm{P}^\infty$).

One of the other fascinating aspects of the classical Cayley-Dickson
construction is of course that it can be iterated. Our construction
as it stands does not allow for iteration, and of course we can not
expect it to be indefinitely applicable as we need the associativity
condition. However, it is conceivable that for a strengthened notion of
imaginaroid $A$ (including some coherence conditions on the algebraic
structure), we could have that $\join A{\susp A}$ is again an
imaginaroid. This would be one way to obtain the H-space structure on
$\Sn^7$ in homotopy type theory, but we leave this to future work.

Another byproduct of the classical Cayley-Dickson construction is that
we find the exceptional Lie group $G_2$ as the zero divisors in the
sedenions. Unfortunately, there seems to be no hope for our current approach to
yield such fruits.

\bibliographystyle{hplain}
\bibliography{ktheory-hott}

\section*{Appendix: formalization}

The following files have been incorporated into the homotopy type
theory library of Lean, and can thus be found at:
\url{https://github.com/leanprover/lean/}.

File \verb"imaginaroid.hlean":
\begin{lstlisting}
import algebra.group cubical.square types.pi .hopf

open eq eq.ops equiv susp hopf
open [notation] sum

namespace imaginaroid

structure has_star [class] (A : Type) :=
(star : A → A)

reserve postfix `*` : (max+1)
postfix `*` := has_star.star

structure involutive_neg [class] (A : Type) extends has_neg A
  := (neg_neg : ∀a, neg (neg a) = a)

section
  variable {A : Type}
  variable [H : involutive_neg A]
  include H

  theorem neg_neg (a : A) : - -a = a :=
  !involutive_neg.neg_neg
end

section
  /- In this section we construct, when A has a negation,
     a unit, a negation and a conjugation on susp A.
     The unit 1 is north, so south is -1. The negation must
     then swap north and south, while the conjugation fixes
     the poles and negates on meridians.
  -/
  variable {A : Type}

  definition has_one_susp [instance] : has_one (susp A) :=
  ⦃ has_one, one := north ⦄

  variable [H : has_neg A]
  include H

  definition susp_neg : susp A → susp A :=
  susp.elim south north (λa, (merid (neg a))⁻¹)

  definition has_neg_susp [instance] : has_neg (susp A) :=
  ⦃ has_neg, neg := susp_neg⦄

  definition susp_star : susp A → susp A :=
  susp.elim north south (λa, merid (neg a))

  definition has_star_susp [instance] : has_star (susp A) :=
  ⦃ has_star, star := susp_star ⦄
end

section
  -- If negation on A is involutive, so is negation on susp A
  variable {A : Type}
  variable [H : involutive_neg A]
  include H

  definition susp_neg_neg (x : susp A) : - - x = x :=
  begin
    induction x with a,
    { reflexivity },
    { reflexivity },
    { apply eq_pathover, rewrite ap_id,
      rewrite (ap_compose' (λy, -y)),
      krewrite susp.elim_merid, rewrite ap_inv,
      krewrite susp.elim_merid, rewrite neg_neg,
      rewrite inv_inv, apply hrefl }
  end

  definition involutive_neg_susp [instance]
    : involutive_neg (susp A) :=
  ⦃ involutive_neg, neg_neg := susp_neg_neg ⦄

  definition susp_star_star (x : susp A) : x** = x :=
  begin
    induction x with a,
    { reflexivity },
    { reflexivity },
    { apply eq_pathover, rewrite ap_id,
      krewrite (ap_compose' (λy, y*)),
      do 2 krewrite susp.elim_merid, rewrite neg_neg,
      apply hrefl }
  end

  definition susp_neg_star (x : susp A) : (-x)* = -x* :=
  begin
    induction x with a,
    { reflexivity },
    { reflexivity },
    { apply eq_pathover,
      krewrite [ap_compose' (λy, y*),
                ap_compose' (λy, -y) (λy, y*)],
      do 3 krewrite susp.elim_merid, rewrite ap_inv,
      krewrite susp.elim_merid, apply hrefl }
  end
end

structure imaginaroid [class] (A : Type)
  extends involutive_neg A, has_mul (susp A) :=
(one_mul : ∀x, mul one x = x)
(mul_one : ∀x, mul x one = x)
(mul_neg :
  ∀x y, mul x (@susp_neg A ⦃ has_neg, neg := neg ⦄ y)
        = @susp_neg A ⦃ has_neg, neg := neg ⦄ (mul x y))
(norm :
  ∀x, mul x (@susp_star A ⦃ has_neg, neg := neg ⦄ x) = one)
(star_mul :
  ∀x y, @susp_star A ⦃ has_neg, neg := neg ⦄ (mul x y)
        = mul (@susp_star A ⦃ has_neg, neg := neg ⦄ y)
              (@susp_star A ⦃ has_neg, neg := neg ⦄ x))

section
  variable {A : Type}
  variable [H : imaginaroid A]
  include H

  theorem one_mul (x : susp A) : 1 * x = x :=
  !imaginaroid.one_mul
  
  theorem mul_one (x : susp A) : x * 1 = x :=
  !imaginaroid.mul_one
  
  theorem mul_neg (x y : susp A) : x * -y = -x * y :=
  !imaginaroid.mul_neg

  /- this should not be an instance because we typically
     construct the h_space structure on susp A before
     defining the imaginaroid structure on A -/
  definition imaginaroid_h_space : h_space (susp A) :=
  ⦃ h_space, one := one, mul := mul,
    one_mul := one_mul, mul_one := mul_one ⦄

  theorem norm (x : susp A) : x * x* = 1 :=
  !imaginaroid.norm
  
  theorem star_mul (x y : susp A) : (x * y)* = y* * x* :=
  !imaginaroid.star_mul
  
  theorem one_star : 1* = 1 :> susp A := idp

  theorem neg_mul (x y : susp A) : (-x) * y = -x * y :=
  calc
    (-x) * y = ((-x) * y)**  : susp_star_star
         ... = (y* * (-x)*)* : star_mul
         ... = (y* * -x*)*   : susp_neg_star
         ... = (-y* * x*)*   : mul_neg
         ... = -(y* * x*)*   : susp_neg_star
         ... = -x** * y**    : star_mul
         ... = -x * y**      : susp_star_star
         ... = -x * y        : susp_star_star

  theorem norm' (x : susp A) : x* * x = 1 :=
  calc
    x* * x = (x* * x)**  : susp_star_star
       ... = (x* * x**)* : star_mul
       ... = 1*          : norm
       ... = 1           : one_star
end

/- Here we prove that if A has an associative imaginaroid
   structure, then join (susp A) (susp A) is an h_space -/
section
  parameter A : Type
  parameter [H : imaginaroid A]
  parameter (assoc : Πa b c : susp A, (a * b) * c = a * b * c)
  include A H assoc

  open join

  section lemmata
    parameters (a b c d : susp A)

    local abbreviation f : susp A → susp A :=
    λx, a * c * (-x)

    local abbreviation g : susp A → susp A :=
    λy, c * y * b

    definition lemma_1 : f (-1) = a * c :=
    calc
      a * c * (- -1) = a * c * 1  : idp
                 ... = a * c      : mul_one

    definition lemma_2 : f (c* * a* * d * b*) = - d * b* :=
    calc
      a * c * (-c* * a* * d * b*)
        = a * (-c * c* * a* * d * b*)     : mul_neg
    ... = -a * c * c* * a* * d * b*       : mul_neg
    ... = -(a * c) * c* * a* * d * b*     : assoc
    ... = -((a * c) * c*) * a* * d * b*   : assoc
    ... = -(a * c * c*) * a* * d * b*     : assoc
    ... = -(a * 1) * a* * d * b*          : norm
    ... = -a * a* * d * b*                : mul_one
    ... = -(a * a*) * d * b*              : assoc
    ... = -1 * d * b*                     : norm
    ... = -d * b*                         : one_mul

    definition lemma_3 : g 1 = c * b :=
    calc
      c * 1 * b = c * b : one_mul

    definition lemma_4 : g (c* * a* * d * b*) = a* * d :=
    calc
      c * (c* * a* * d * b*) * b
        = (c * c* * a* * d * b*) * b      : assoc
    ... = ((c * c*) * a* * d * b*) * b    : assoc
    ... = (1 * a* * d * b*) * b           : norm
    ... = (a* * d * b*) * b               : one_mul
    ... = a* * (d * b*) * b               : assoc
    ... = a* * d * b* * b                 : assoc
    ... = a* * d * 1                      : norm'
    ... = a* * d                          : mul_one
  end lemmata

  /- in the algebraic form, the Cayley-Dickson
     multiplication has:

      (a,b) * (c,d) = (a * c - d * b*, a* * d + c * b)

    Here we do the spherical/imaginaroid form.
  -/
  definition cd_mul (x y : join (susp A) (susp A))
    : join (susp A) (susp A) :=
  begin
    induction x with a b a b,
    { induction y with c d c d,
      { exact inl (a * c) },
      { exact inr (a* * d) },
      { apply glue }
    },
    { induction y with c d c d,
      { exact inr (c * b) },
      { exact inl (- d * b*) },
      { apply inverse, apply glue }
    },
    { induction y with c d c d,
      { apply glue },
      { apply inverse, apply glue },
      { apply eq_pathover,
        krewrite [join.elim_glue,join.elim_glue],
        change join.diamond (a * c) (-d * b*) (c * b) (a* * d),
        rewrite [-(lemma_1 a c),-(lemma_2 a b c d),
                 -(lemma_3 b c),-(lemma_4 a b c d)],
        apply join.ap_diamond (f a c) (g b c),
        generalize (c* * a* * d * b*), clear a b c d,
        intro x, induction x with i,
        { apply join.vdiamond, reflexivity },
        { apply join.hdiamond, reflexivity },
        { apply join.twist_diamond } } }
  end

  definition cd_one_mul (x : join (susp A) (susp A))
    : cd_mul (inl 1) x = x :=
  begin
    induction x with a b a b,
    { apply ap inl, apply one_mul },
    { apply ap inr, apply one_mul },
    { apply eq_pathover, rewrite ap_id, unfold cd_mul,
      krewrite join.elim_glue, apply join.hsquare }
  end

  definition cd_mul_one (x : join (susp A) (susp A))
    : cd_mul x (inl 1) = x :=
  begin
    induction x with a b a b,
    { apply ap inl, apply mul_one },
    { apply ap inr, apply one_mul },
    { apply eq_pathover, rewrite ap_id, unfold cd_mul,
      krewrite join.elim_glue, apply join.hsquare }
  end

  definition cd_h_space [instance]
    : h_space (join (susp A) (susp A)) :=
  ⦃ h_space, one := inl one, mul := cd_mul,
    one_mul := cd_one_mul, mul_one := cd_mul_one ⦄

end

end imaginaroid
\end{lstlisting}

File \verb"quaternionic_hopf.hlean"

\begin{lstlisting}
import .complex_hopf .imaginaroid

open eq equiv is_equiv circle is_conn trunc is_trunc
     sphere_index sphere susp imaginaroid

namespace hopf

definition involutive_neg_empty [instance]
  : involutive_neg empty :=
⦃ involutive_neg, neg := empty.elim,
  neg_neg := by intro a; induction a ⦄

definition involutive_neg_circle [instance]
  : involutive_neg circle :=
involutive_neg_susp

definition has_star_circle [instance] : has_star circle :=
has_star_susp

/- this is the "natural" conjugation defined using the
   base-loop recursor -/
definition circle_star [reducible] : S¹ → S¹ :=
circle.elim base loop⁻¹

definition circle_neg_id (x : S¹) : -x = x :=
begin
  fapply (rec2_on x),
  { exact seg2⁻¹ },
  { exact seg1 },
  { apply eq_pathover, rewrite ap_id, krewrite elim_merid,
    apply square_of_eq, reflexivity },
  { apply eq_pathover, rewrite ap_id, krewrite elim_merid,
    apply square_of_eq, apply trans (con.left_inv seg2),
    apply inverse, exact con.left_inv seg1 }
end

definition circle_mul_neg (x y : S¹) : x * (-y) = - x * y :=
by rewrite [circle_neg_id,circle_neg_id]

definition circle_star_eq (x : S¹) : x* = circle_star x :=
begin
  fapply (rec2_on x),
  { reflexivity },
  { exact seg2⁻¹ ⬝ (tr_constant seg1 base)⁻¹ },
  { apply eq_pathover, krewrite elim_merid,
    rewrite elim_seg1, apply square_of_eq, apply trans
      (ap (λw, w ⬝ (tr_constant seg1 base)⁻¹)
          (con.right_inv seg2)⁻¹),
    apply con.assoc },
  { apply eq_pathover, krewrite elim_merid,
    rewrite elim_seg2, apply square_of_eq,
    rewrite [↑loop,con_inv,inv_inv,idp_con],
    apply con.assoc }
end

open prod prod.ops

definition circle_norm (x : S¹) : x * x* = 1 :=
begin
  rewrite circle_star_eq, induction x,
  { reflexivity },
  { apply eq_pathover, rewrite ap_constant,
    krewrite [ap_compose' (λz : S¹ × S¹, circle_mul z.1 z.2)
                          (λa : S¹, (a, circle_star a))],
    rewrite [ap_compose' (prod_functor (λa : S¹, a)
                                       circle_star)
                         (λa : S¹, (a, a))],
    rewrite ap_diagonal,
    krewrite [ap_prod_functor (λa : S¹, a) circle_star
                              loop loop],
    rewrite [ap_id,↑circle_star], krewrite elim_loop,
    krewrite (ap_binary circle_mul loop loop⁻¹),
    rewrite [ap_inv,↑circle_mul,elim_loop,ap_id,
             ↑circle_turn,con.left_inv],
    constructor }
end

definition circle_star_mul (x y : S¹) : (x * y)* = y* * x* :=
begin
  induction x,
  { apply inverse, exact circle_mul_base (y*) },
  { apply eq_pathover, induction y,
    { exact natural_square_tr 
        (λa : S¹, ap (λb : S¹, b*) (circle_mul_base a))
        loop },
    { apply is_prop.elimo } }
end

definition imaginaroid_sphere_zero [instance]
  : imaginaroid (sphere (-1.+1)) :=
⦃ imaginaroid,
  neg_neg := susp_neg_neg,
  mul := circle_mul,
  one_mul := circle_base_mul,
  mul_one := circle_mul_base,
  mul_neg := circle_mul_neg,
  norm := circle_norm,
  star_mul := circle_star_mul ⦄

local attribute sphere [reducible]
open sphere.ops

definition sphere_three_h_space [instance] : h_space (S 3) :=
@h_space_equiv_closed (join S¹ S¹)
  (cd_h_space (S -1.+1) circle_assoc)
  (S 3) (join.spheres 1 1)

definition is_conn_sphere_three : is_conn 0 (S 3) :=
begin
  have le02 : trunc_index.le 0 2,
  from trunc_index.le.step
    (trunc_index.le.step (trunc_index.le.tr_refl 0)),
  exact @is_conn_of_le (S 3) 0 2 le02 (is_conn_sphere 3)
end

local attribute is_conn_sphere_three [instance]

definition quaternionic_hopf : S 7 → S 4 :=
begin
  intro x, apply @sigma.pr1 (susp (S 3)) (hopf (S 3)),
  apply inv (hopf.total (S 3)), apply inv (join.spheres 3 3),
  exact x
end

open pointed fiber function

definition quaternionic_phopf [constructor] : S* 7 →* S* 4 :=
proof pmap.mk quaternionic_hopf idp qed

definition pfiber_quaternionic_phopf
  : pfiber quaternionic_phopf ≃* S* 3 :=
begin
  fapply pequiv_of_equiv,
  { esimp, unfold [quaternionic_hopf],
    refine fiber.equiv_precompose
      (sigma.pr1 ∘ (hopf.total (S 3))⁻¹ᵉ)
      (join.spheres (of_nat 3) (of_nat 3))⁻¹ᵉ _ ⬝e _,
    refine fiber.equiv_precompose _
      (hopf.total (S 3))⁻¹ᵉ _ ⬝e _,
    apply fiber_pr1 },
  { reflexivity }
end

end hopf
\end{lstlisting}
\end{document}